\newtheorem{theorem}{Theorem}
\newtheorem{lemma}[theorem]{Lemma}
\newtheorem{corollary}[theorem]{Corollary}
\newtheorem{remark}[theorem]{Remark}
\begin{document}

\begin{frontmatter}

%% Title, authors and addresses

%% use the tnoteref command within \title for footnotes;
%% use the tnotetext command for theassociated footnote;
%% use the fnref command within \author or \address for footnotes;
%% use the fntext command for theassociated footnote;
%% use the corref command within \author for corresponding author footnotes;
%% use the cortext command for theassociated footnote;
%% use the ead command for the email address,
%% and the form \ead[url] for the home page:
%% \title{Title\tnoteref{label1}}
%% \tnotetext[label1]{}
%% \author{Name\corref{cor1}\fnref{label2}}
%% \ead{email address}
%% \ead[url]{home page}
%% \fntext[label2]{}
%% \cortext[cor1]{}
%% \address{Address\fnref{label3}}
%% \fntext[label3]{}

\title{Variational principle for subadditive sequence of potentials in bundle RDS}

%% use optional labels to link authors explicitly to addresses:
%% \author[label1,label2]{}
%% \address[label1]{}
%% \address[label2]{}

\author[Shanghai,Nanjing]{Xianfeng Ma}
\ead{xianfengma@gmail.com}
\author[Nanjing,NJU]{Ercai Chen}
\ead{ecchen@njnu.edu.cn}
\address[Shanghai]{Department of Mathematics, East China University of Science and
Technology\\ Shanghai 200237, China}
\address[Nanjing]{School of Mathematics and Computer Science, Nanjing Normal
University\\Nanjing 210097, China}
\address[NJU]{Center of Nonlinear Science, Nanjing University\\Nanjing 210093,
China}

\begin{abstract}
%% Text of abstract
The topological pressure is defined for  subadditive sequence of
potentials  in  bundle random dynamical systems. A variational
principle for the topological pressure is set up in a very weak
condition. The result may have some applications in the study of
multifractal analysis for random version  of nonconformal dynamical
systems.
\end{abstract}

\begin{keyword}
%% keywords here, in the form: keyword \sep keyword
Random dynamical system \sep subadditive sequence of potentials \sep
variational principle
%% PACS codes here, in the form: \PACS code \sep code

%% MSC codes here, in the form: \MSC code \sep code
%% or \MSC[2008] code \sep code (2000 is the default)
\MSC 37D35 \sep 37A35 \sep 37H99
\end{keyword}

\end{frontmatter}

%% \linenumbers

%% main text
\section{Introduction}
\label{Introduction} The topological pressure for single potential
was first presented by Ruelle \cite{Ruelle} for expansive maps.
Walters \cite{Walters1975} generalized it to general continuous
maps. The theory about the topological pressure, variational
principle and equilibrium states plays a fundamental role in
statistical mechanics, ergodic theory and dynamical systems
(\cite{Denker1976,Bowen79,Ruelle1978,Bowen,Pesin,Denker}). Falconer
\cite{Falconer} introduced the topological pressure for subadditive
sequence of potentials on mixing repellers. Cao \cite{Cao} extended
this notion to general compact dynamical systems. The topological
pressure for nonadditive sequence of potentials has proved valuable
tool in the study of multifractal formalism of dimension theory,
especially for nonconformal dynamical systems
\cite{Falconer,Barreira2006,Cao2008}.

In random dynamical systems (RDS), the topological pressure is also
important in the study of chaotic properties of random
transformations
\cite{Denker2008,Kifer1996,Bogen1995,Kifer1992,Kifer2008}. The
earlier work on the topological pressure for single potential was
due to Ledrappier \cite{LedWal} and Bogensch{\"u}tz \cite{Bogen}.
Bogensch{\"u}tz \cite{Bogen1999} also established the random version
of the Bowen-Ruelle formula for expanding almost conformal bundle
RDS. Later, Kifer \cite{Kifer2001} generalized this notion to
general bundle RDS and set up the corresponding variational
principle. Thus it is a natural question if there exists a random
version of thermodynamic formalism for subadditive sequence of
potentials, which probably have some potential applications in the
study of multifractal formalism of nonconformal RDS.

In this paper, we give the definition of  topological pressure for
subadditive sequence of potentials and derive a variational
principle for the topological pressure.  In fact, we formulate a
variational principle between the topological pressure,
measure-theoretic entropies of RDS and some functions about the
invariant measure. Our conditions for this principle are very weak.
We only assume that the topological pressure is not $-\infty$. As to
the case of $-\infty$, the condition $\Phi^*(\mu)=-\infty$ for all
invariant measure $\mu$ is equivalent to that the topological
pressure is $-\infty$. The result generalizes both Kifer's additive
variational principle to subadditive case and Cao's result in
deterministic dynamical systems to bundle RDS. The method we used is
still in the framework of Kifer's approach \cite{Kifer2001}, which
is the generalization of Misiurewicz's elegant proof of the
nonadditive variational principle \cite{Misiurewicz}. However, since
the technique for tackling the subadditive sequence is different
from the additive case, we make some changes.

This paper is organized as follows. In section \ref{TopPre} we give
a short description of the definitions of bundle RDS, the
measure-theoretic entropies of RDS and the topological pressure for
the subadditive sequence of potentials together with a corollary. In
section \ref{SubVP}, we state the variational principle for the
subadditive sequence of potentials and give the proof. A required
lemma is also given there.

\section{Preliminary}
\label{TopPre} Let $(\Omega,\mathcal {F}, \mathbf{P})$ be a
probability space together with an invertible
$\mathbf{P}$-preserving transformation $\vartheta$, where
$\mathcal{F}$ is assumed to be complete, countably generated and to
separate points. Let $(X,d)$ be a compact metric space together with
the Borel $\sigma$-algebra $\mathcal{B}$. A set $\mathcal{E}\subset
\Omega \times X$ is measurable with respect to the product
$\sigma$-algebra $\mathcal{F}\times \mathcal{B}$ and such that the
fibers $\mathcal{E}_{\omega}=\{ x\in X: (\omega, x)\in
\mathcal{E}\}$, $\omega \in \Omega$, are compact. A continuous
bundle random dynamical system (RDS) over $(\Omega,\mathcal {F},
\mathbf{P}, \vartheta)$ is generated by map $T_{\omega}:
\mathcal{E_{\omega}}\rightarrow \mathcal{E_{\vartheta \omega}}$ with
iterates $T_{\omega}^{n}=T_{\vartheta^{n-1}\omega} \cdots
T_{\vartheta\omega}T_{\omega}$, $ n \geq 1$, so that the map
$(\omega, x) \rightarrow  T_{\omega}x$ is measurable and the map $x
\rightarrow T_{\omega}x$ is continuous for $\mathbf{P}$-almost all
(a.a) $\omega$. The map $\Theta : \mathcal{E} \rightarrow
\mathcal{E}$ defined by $\Theta(\omega,x)=(\vartheta\omega,
T_{\omega}x)$ is called the skew product transformation.

Let $\mathcal {P}_{\mathbf{P}}(\mathcal{E}) = \{\mu \in \mathcal
{P}_{\mathbf{P}}(\Omega \times X) : \mu(\mathcal{E})=1\}$, where
$\mathcal {P}_{\mathbf{P}}(\Omega \times X)$ is the space of
probability measures on $\Omega \times X$ having the marginal
$\mathbf{P}$ on $\Omega$. Any $\mu \in \mathcal
{P}_{\mathbf{P}}(\mathcal{E})$ on $\mathcal{E}$ can be disintegrated
as $d \mu(\omega, x)=d \mu_{\omega}(x)\,d\mathbf{P}(\omega)$ (See
\cite{Dudley}), where $\mu_{\omega}$ are regular conditional
probabilities with respect to the $\sigma$-algebra $\mathcal
{F}_{\mathcal{E}}$ formed by all sets $(A\times X)\cap \mathcal{E}$
with $A \in \mathcal{F}$. Let $\mathcal
{M}_{\mathbf{P}}^1(\mathcal{E}, T)$ be the set of $\Theta$-invariant
measures $\mu \in\mathcal {P}_{\mathbf{P}}(\mathcal{E})$. $\mu$ is
$\Theta$-invariant if and only if the disintegrations $\mu_{\omega}$
of $\mu$ satisfy $T_{\omega}\mu_{\omega}=\mu_{\vartheta\omega}\,
\mathbf{P}$-a.s. \cite{Arnold}. Let $\mathcal {Q}=
\{\mathcal{Q}_{i}\}$ be a finite measurable partition of
$\mathcal{E}$, and $\mathcal {Q}(\omega)=
\{\mathcal{Q}_{i}(\omega)\}$, where
$\{\mathcal{Q}_{i}(\omega)\}=\{x\in \mathcal{E}_{\omega}:
(\omega,x)\in \mathcal{Q}_{i}\}$, is a partition of
$\mathcal{E}_{\omega}$. The conditional entropy of $\mathcal{Q}$
given the $\sigma$-algebra $\mathcal {F}_{\mathcal{E}}$ is defined
by
\begin{equation}
\label{Con-entropy}
 H_{\mu}(\mathcal{Q}\mid \mathcal {F}_{\mathcal{E}}) = -\int \sum_i
\mu(\mathcal{Q}_i \mid \mathcal{F}_{\mathcal{E}}) \log
\mu(\mathcal{Q}_i \mid \mathcal{F}_{\mathcal{E}})\,d\mathbf{P} =
\int H_{\mu_{\omega}}(\mathcal{Q}(\omega))\,d\mathbf{P}(\omega),
\end{equation}
where $H\mu_{\omega}(\mathcal{A})$ denotes the usual entropy of a
partition $\mathcal{A}$.
 The entropy
$h_{\mu}^{(r)}(T)$ of the RDS $T$ with respect to $\mu$ is defined
by the formula
\begin {equation}
\label{entropy}
 h_{\mu}^{(r)}(T)=\sup_{\mathcal{Q}}h_{\mu}^{(r)}(T,
\mathcal{Q}), \quad \text{where} \quad
 h_{\mu}^{(r)}(T, \mathcal{Q})=
\lim_{n\rightarrow \infty}\frac{1}{n} H_{\mu} \,\Bigg(
\bigvee_{i=0}^{n-1}(\Theta^i)^{-1}\mathcal{Q}\mid
\mathcal{F}_{\mathcal{E}}\Bigg),
\end {equation}
the supremum is taken over all finite measurable partitions
$\mathcal{Q}=\{\mathcal{Q}_i\}$ of $\mathcal{E}$ with finite
conditional entropy $H_{\mu}(\mathcal{Q}\mid
\mathcal{F}_{\mathcal{E}})< \infty$. It should be noted that the
supremum can be taken only over partitions $\mathcal{Q}$ of
$\mathcal{E}$ into sets $Q_i$ of the form $Q_i=(\Omega\times
P_i)\cap\mathcal{E}$, where $\mathcal{P}=\{P_i\}$ is a partition of
$X$ into measurable sets, so that
$Q_i(\omega)=P_i\cap\mathcal{E}_{\omega}$ (See
\cite{Bogen,Kifer,Bogenthesis}). By \eqref{Con-entropy}, the limit
can be also expressed as
\begin{equation}
h_{\mu}^{(r)}(T, \mathcal{Q})= \lim_{n\rightarrow \infty}\frac{1}{n}
\int H_{\mu_{\omega}}\,\Bigg(\bigvee_{i=0}^{n-1}(T_{\omega}^i)^{-1}
\mathcal{Q}(\vartheta^i\omega)\Bigg)\,d\mathbf{P}(\omega).
\end{equation}

For each measurable in $(\omega, x)$ and continuous in $x \in
\mathcal{E}_{\omega}$ function $f$ on $\mathcal{E}$, let
$$
\| f \| =\int \|  f(\omega) \|_{\infty} \, d\mathbf{P}, \quad
\text{where} \quad \|  f(\omega) \|_{\infty} = \sup_{x\in
\mathcal{E}_{\omega}}\mid f(\omega,x) \mid,
$$
and $\mathbf{L}_{\omega}^1 (\Omega, \mathcal{C}(X))$ be the space of
such functions $f$ with $\|f \| < \infty$ and identify $f$ and $g$
provided $\| f-g\| =0$, then $\mathbf{L}_{\omega}^1 (\Omega,
\mathcal{C}(X))$ is a Banach space with the norm $\| \cdot \|$.

Let $\Phi =\{ f_n\}_{n=1}^{\infty}$ be a sequence functions on
$\mathcal{E}$ such that each $f_n$ is measurable in $\omega$ and
continuous in $x$  on $\mathcal{E}$. These functions are measurable
in $(\omega, x)$ in view of Lemma III.14 from \cite{Kifer}.
 $\Phi$ is called \textit{subadditive} if for any $(\omega, x)\in
 \mathcal{E}$ and $ m, n\in\mathbb{N}$,
$$
f_{n+m}(\omega, x)\leq f_n(\omega, x)+f_m (\Theta^n(\omega, x)).
$$
If $f_1\in \mathbf{L}_{\mathcal{E}}^1(\Omega, \mathcal{C}(X))$ and
the above inequality is satisfied, then a simple calculation
indicates that each $f_n \in \mathbf{L}_{\mathcal{E}}^1(\Omega,
\mathcal{C}(X))$. In the sequel we always assume $\Phi$ satisfies
these conditions.

For any $\Theta$-invariant measure $\mu$, denote
$$
\Phi^*(\mu)=\lim_{n\rightarrow \infty}\frac{1}{n}\int f_n \,d\mu.
$$
Existence of the limit follows from the well-known subadditive
argument. If we denote $\Phi^k=\{f_{kn}\}_{n=1}^{\infty}$ for any
$k\in \mathbb{N}$, then $(\Phi^k)^*(\mu)= k \Phi^*(\mu)$.

For each $n\in\mathbb{N}$  and $\epsilon >0$, a family of metrics
$d_n^{\omega}$ on $\mathcal{E}_{\omega}$ is defined as
$$d_n^{\omega}(x,y)=\max_{0 \leq k < n}(d(T_{\omega}^k x,
T_{\omega}^k y)), \quad  x, y \in \mathcal{E}_{\omega},$$ where
$T_{\omega}^0$ is the identity map.
 For each $n\in\mathbb{N}$  and
$\epsilon >0$, a set $F\subset \mathcal{E}_{\omega}$ is said to be
$(\omega, \epsilon, n)$-separated if $x, y \in F$, $x\neq y$ implies
$d_n^{\omega}(x,y)>\epsilon$.

For $\Phi= \{f_n\}_{n=1}^{\infty}$, $\epsilon>0$ and an $(\omega,
\epsilon, n)$-separated set $F\subset \mathcal{E}_{\omega}$, denote
$$\pi_T(\Phi)(\omega, \epsilon, n
)=\sup\{\sum_{x\in F}\exp(f_n(\omega,x)): F\, \text{is an} (\omega,
\epsilon, n )\text{-separated subset of } \mathcal{E}_{\omega} \}.
$$
Obviously, the supremum  can be taken only over all maximal
$(\omega, \epsilon, n )$-separated subsets. By replacing the
function $S_n f$ in Lemma 1.2 of \cite{Kifer2001} with $f_n$, a
completely similar argument can give the following result, which
provides basic measurable properties we needed. In fact, for any
measurable function $g$ on the $\mathcal{E}_{\omega}$, the result is
also correct.

\begin{lemma}\label{lemkifer}
For any $n\in \mathbb{N}$ and $\epsilon > 0$ the function
$\pi_T(\Phi)(\omega, \epsilon, n )$is measurable in $\omega$, and
for each $\delta > 0$ there exists a family of maximal $(\omega,
\epsilon, n )$-separated set $G_{\omega} \subset
\mathcal{E}_{\omega}$ satisfying
\begin{equation}
\sum_{x\in G_{\omega}}\exp(f_n(\omega,x))\geq (1-\delta)
\pi_T(\Phi)(\omega, \epsilon, n )
\end{equation}
and depending measurably on $\omega$ in the sense that $G=\{
(\omega,x):x\in G_{\omega}) \} \in \mathcal{F} \times \mathcal {B}$,
In particular, the supremum in the definition of
$\pi_T(\Phi)(\omega, \epsilon, n )$ can be taken only measurable in
$\omega$ families of $(\omega, \epsilon, n )$-separated sets.
\end{lemma}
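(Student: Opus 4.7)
The plan is to transcribe the proof of Lemma 1.2 in \cite{Kifer2001}, systematically replacing the Birkhoff sum $S_n f$ by $f_n$. As the authors already observe just before the statement, the argument there never uses additivity; it only relies on joint measurability of the integrand, continuity of $x \mapsto f_n(\omega, x)$ for $\mathbf{P}$-a.a.\ $\omega$, and compactness of the fibers $\mathcal{E}_\omega$. All three properties are available for our $\Phi$ under the standing hypotheses combined with Lemma III.14 of \cite{Kifer}, so the entire derivation carries over.

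For the measurability of $\omega \mapsto \pi_T(\Phi)(\omega, \epsilon, n)$, compactness of $\mathcal{E}_\omega$ yields a measurable upper bound $N(\omega)$ on the cardinality of any $(\omega, \epsilon, n)$-separated set. For each $k \in \mathbb{N}$,
\[
A_k = \bigl\{(\omega, x_1, \dots, x_k) : x_i \in \mathcal{E}_\omega,\ d_n^{\omega}(x_i, x_j) > \epsilon \text{ for } i \neq j\bigr\}
\]
is jointly measurable in $\Omega \times X^k$, because $(\omega, x) \mapsto T_\omega^{j} x$ is measurable and $d$ is continuous. The function $(\omega, x_1, \dots, x_k) \mapsto \sum_{i=1}^{k} \exp f_n(\omega, x_i)$ is measurable on $A_k$, and its fiberwise supremum is measurable in $\omega$ by the standard measurable projection theorem, applicable thanks to completeness of $\mathcal{F}$. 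Taking the countable supremum over $k$ (only $k \leq N(\omega)$ contributing) yields measurability of $\pi_T(\Phi)(\omega, \epsilon, n)$.

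For the existence of the family $G_\omega$, given $\delta > 0$ I would apply Lemma III.14 of \cite{Kifer} to the measurable multifunction
\[
\omega \mapsto \Bigl\{(x_1, \dots, x_k) \in (A_k)_\omega : \sum_{i=1}^{k} \exp f_n(\omega, x_i) \geq (1 - \delta) \pi_T(\Phi)(\omega, \epsilon, n)\Bigr\}
\]
on the measurable $\omega$-set where it is nonempty, and then glue the resulting selections over the countably many choices of $k$ to produce a measurably varying near-optimal $(\omega, \epsilon, n)$-separated set $F_\omega$ satisfying the required inequality but not necessarily maximal. I would then extend $F_\omega$ to a maximal $(\omega, \epsilon, n)$-separated $G_\omega$ by iteratively adjoining a measurable selection from the multifunction $\omega \mapsto \{x \in \mathcal{E}_\omega : d_n^\omega(x, y) > \epsilon \text{ for every previously chosen } y\}$, stopped at the first step at which this set is empty; since $\exp f_n > 0$, each enlargement only increases the sum, so the inequality survives, and the resulting graph lies in $\mathcal{F} \times \mathcal{B}$.

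The main obstacle I anticipate is precisely this coordination of measurability with maximality in the enlargement step. Both the iterative adjunction and the data-dependent stopping time must be arranged measurably, which is handled by repeated applications of Lemma III.14 of \cite{Kifer} together with the observation that ``the multifunction of admissible extensions is empty'' is a measurable event, being the complement of a measurable projection. Beyond this point, the proof is a formal substitution of $f_n$ for $S_n f$ throughout the argument of \cite{Kifer2001}, which also explains the authors' remark that the same conclusion holds for arbitrary measurable integrands $g$ on $\mathcal{E}_\omega$.
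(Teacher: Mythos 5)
Your proposal is correct and takes the same route as the paper, which itself offers no argument beyond the observation that the proof of Lemma~1.2 in \cite{Kifer2001} goes through verbatim once $S_n f$ is replaced by $f_n$ (additivity being nowhere used, only joint measurability, fiberwise continuity, and compactness of $\mathcal{E}_\omega$). Your reconstruction of the underlying selection argument --- measurable projection for the supremum, measurable selection for near-optimal tuples, and the iterated measurable extension to a maximal separated set --- is a faithful and sound account of what that cited proof actually does.
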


In view of this lemma, for $\Phi= \{f_n\}_{n=1}^{\infty}$ as above
and $\epsilon >0$, we can denote
\begin{equation}
\label{pressure} \pi_T(\Phi)(\epsilon)= \limsup_{n\rightarrow
\infty} \frac{1}{n} \int \log \pi_T(\Phi)(\omega,\epsilon, n) \,
d\mathbf{P}(\omega).
\end{equation}
The topological pressure of $\Phi$ is defined as
$$
\pi_T(\Phi)=\lim_{\epsilon\rightarrow 0}\pi_T(\Phi)(\epsilon),
$$
since $\pi_T(\Phi)(\epsilon)$ is a monotone decreasing function in
$\omega$, the limit exists and  the limit in fact equals to
$\sup_{\epsilon>0}\pi_T(\Phi)(\epsilon)$.

For a given $n\in\mathbb{N}_{+}$, through replacing $\vartheta$ by
$\vartheta^n$ we can consider the bundle RDS $T^k$ defined by
$(T^k)_{\omega}^n=T_{\vartheta^{(n-1)k}\omega}^k \cdots
T_{\vartheta^k \omega}^k T_{\omega}^k$.

\begin{corollary}\label{lem1}
If $\Phi= \{f_n\}_{n=1}^{\infty}$ is a subadditive sequence of
functions, each $f_n$ is  measurable in $\omega$ and continuous in
$x$ on $\mathcal{E}$ and $f_1 \in \mathbf{L}_{\mathcal{E}}^1(\Omega,
\mathcal{C}(X))$, then for any $n \in \mathbb{N}_{+}$,
$\pi_{T^k}(\Phi^k) = k\pi_T(\Phi)$.
\end{corollary}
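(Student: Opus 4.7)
My plan is to establish the equality by proving two opposite inequalities at the level of $(\omega,\epsilon,n)$-separated sets, and then pass to $\limsup_{n\to\infty}$ and $\epsilon\to 0$.

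For the upper bound $\pi_{T^k}(\Phi^k)\leq k\pi_T(\Phi)$, the key observation is that the $T^k$-dynamical metric $\widetilde d_n^\omega(x,y) = \max_{0\leq j<n} d(T_\omega^{jk}x,T_\omega^{jk}y)$ is pointwise dominated by $d_{kn}^\omega(x,y)$. Consequently, every set that is $(\omega,\epsilon,n)$-separated for $T^k$ is also $(\omega,\epsilon,kn)$-separated for $T$. Since the $n$-th term of $\Phi^k$ is precisely $f_{kn}$, the exponential sums entering the two definitions coincide, giving
\[
\pi_{T^k}(\Phi^k)(\omega,\epsilon,n) \;\leq\; \pi_T(\Phi)(\omega,\epsilon,kn).
\]
Taking logarithms, integrating against $\mathbf{P}$, dividing by $n$, and passing to $\limsup_{n\to\infty}$, the subsequence $\limsup$ along $(kn)$ on the right is at most the full $\limsup$, hence $\pi_{T^k}(\Phi^k)(\epsilon)\leq k\pi_T(\Phi)(\epsilon)$; letting $\epsilon\to 0$ concludes this direction.

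For the lower bound $\pi_{T^k}(\Phi^k)\geq k\pi_T(\Phi)$, the reverse comparison must be made. Fix $\epsilon>0$. For each $r=1,\dots,k-1$, continuity of $x\mapsto T_\omega^r x$ on the compact fiber $\mathcal{E}_\omega$ yields a modulus $\gamma_r(\omega,\epsilon)>0$ such that $d(a,b)\leq\gamma_r(\omega,\epsilon)$ implies $d(T_\omega^r a,T_\omega^r b)\leq\epsilon$. Put $\gamma(\omega,\epsilon)=\min_{1\leq r<k}\gamma_r(\omega,\epsilon)$, which is measurable in $\omega$ by joint measurability of $T$ and $\mathbf{P}$-a.s.\ positive. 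Whenever $\epsilon'\leq \min_{0\leq j<n}\gamma(\vartheta^{jk}\omega,\epsilon)$, the implication $\widetilde d_n^\omega(x,y)\leq\epsilon'\Rightarrow d_{kn}^\omega(x,y)\leq\epsilon$ is valid, so any $(\omega,\epsilon,kn)$-separated set for $T$ is $(\omega,\epsilon',n)$-separated for $T^k$, yielding $\pi_T(\Phi)(\omega,\epsilon,kn)\leq \pi_{T^k}(\Phi^k)(\omega,\epsilon',n)$ on the corresponding $\omega$-set. A Luzin-type truncation provides, for each $\eta>0$, a uniform $\epsilon'>0$ on a set $\Omega_0$ with $\mathbf{P}(\Omega_0)>1-\eta$ where $\gamma(\omega,\epsilon)\geq\epsilon'$; the subadditive estimate $\|f_{kn}(\omega)\|_\infty \leq \sum_{j<kn}\|f_1(\vartheta^j\omega)\|_\infty$ together with $f_1\in\mathbf{L}_{\mathcal{E}}^1(\Omega,\mathcal{C}(X))$ controls the contribution of the bad complement in $L^1$, rendering it negligible as $\eta\to 0$. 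Integrating, dividing by $n$, passing to $\limsup_{n\to\infty}$, and then letting $\epsilon\to 0$ completes the lower bound.

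The main obstacle is the $\omega$-dependence of the modulus $\gamma(\omega,\epsilon)$ in the lower bound, which blocks a naive uniform comparison between $T$-separation at scale $\epsilon$ over $kn$ steps and $T^k$-separation over $n$ steps; the Luzin truncation combined with integrability of $f_1$ is the standard device (in the spirit of Kifer \cite{Kifer2001}) to absorb the non-uniformity into an arbitrarily small error.
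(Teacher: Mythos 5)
Your upper bound $\pi_{T^k}(\Phi^k)\le k\pi_T(\Phi)$ is exactly the paper's argument and is fine (and it is the only half actually used later in the variational principle). The lower bound, however, has a genuine gap that the paper's proof is specifically built to avoid: you compare $T$-separation with $T^k$-separation only at times $m=kn$, so after integrating and taking limits you control $\limsup_{n\to\infty}\frac{1}{kn}\int\log\pi_T(\Phi)(\omega,\epsilon,kn)\,d\mathbf{P}$, i.e.\ the $\limsup$ of $\frac{1}{m}\int\log\pi_T(\Phi)(\omega,\epsilon,m)\,d\mathbf{P}$ along the subsequence $m\in k\mathbb{N}$ only. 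Since $\pi_T(\Phi)(\epsilon)$ is a $\limsup$ over all $m$, and there is no free monotonicity or approximate subadditivity of $\log\pi_T(\Phi)(\omega,\epsilon,m)$ in $m$ (an $(\omega,\epsilon,m+1)$-separated set need not be $(\omega,\epsilon,m)$-separated), you have bounded a possibly strictly smaller quantity than $k\pi_T(\Phi)(\epsilon)$. The paper treats an arbitrary $m$ by taking $n$ with $kn\le m<k(n+1)$, noting that an $(\omega,\epsilon,m)$-separated set for $T$ is a $(\omega,\delta,n)$-separated set for $T^k$, and using the subadditive splitting $f_m\le f_{kn}+f_{m-kn}\circ\Theta^{kn}\le f_{kn}+\sum_{i=kn}^{m-1}f_1\circ\Theta^i$ together with $f_1\in\mathbf{L}_{\mathcal{E}}^1(\Omega,\mathcal{C}(X))$, so the leftover block of fewer than $k$ iterates contributes $O(1)$ in $L^1$ and vanishes after dividing by $m$. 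You do cite the estimate $\|f_{kn}(\omega)\|_\infty\le\sum_{j<kn}\|f_1(\vartheta^j\omega)\|_\infty$, but for a different purpose; the $m\notin k\mathbb{N}$ issue is simply not addressed.

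Second, the Luzin truncation does not deliver what you claim. The implication ``$(\omega,\epsilon,kn)$-separated for $T$ implies $(\omega,\epsilon',n)$-separated for $T^k$'' needs $\gamma(\vartheta^{jk}\omega,\epsilon)\ge\epsilon'$ simultaneously for all $j=0,\dots,n-1$; for a fixed $\Omega_0$ with $\mathbf{P}(\Omega_0)>1-\eta$ and $n\to\infty$, almost every orbit $\{\vartheta^{jk}\omega\}_{j<n}$ eventually leaves $\Omega_0$, so the hypothesis fails for a.e.\ $\omega$ once $n$ is large. When a block is bad the defect is combinatorial: points that are $\epsilon'$-close for $T^k$ may still be $\epsilon$-separated by $T$ inside that block, inflating the separated set, and this cannot be absorbed by an $L^1$ bound on $f_1$, which controls the size of the potential rather than the cardinality of separated sets. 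To your credit, you have put your finger on a real delicacy that the paper glosses over --- it chooses $\delta$ ``by the continuity of $T_\omega$'' without acknowledging that $\delta$ depends on $\omega$, tacitly assuming a uniform modulus of continuity --- but the repair you sketch is not a proof.
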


\begin{proof}
 Since each $(\omega, \epsilon, n)$-separated set for $T^k$ is also a
 $(\omega, \epsilon, kn)$ for $T$, then
 $\pi_T(\Phi)(\omega,$ $ \epsilon,kn)\geq \pi_{T^k}(\Phi^k)(\omega, \epsilon, n)$
 and $\pi_{T^k}(\Phi^k) \leq k\pi_T(\Phi)$ follows.
 For any $\epsilon >0$, by the continuity of $T_{\omega}$, there
 exists some small enough $\delta >0$ such that if $d(x,y)\leq
 \delta$, $x, y \in \mathcal{E}_{\omega}$ then
 $d_{\omega}^k(x,y)<\epsilon$. For any positive integer $m$, there
  exists some integer $n$ such that $kn \leq m < k(n+1)$. It is easy
  to see that any $(\omega, \epsilon, m)$-separated set of
  $T$ is also an $(\omega, \delta, n)$-separated set of $T^k$.
  In  view of $f_m(\omega, x) \leq f_{kn}(\omega,x) +
  f_{m-kn}(T^{kn}(\omega,x))$ and
  $f_{m-kn}(T^{kn}(\omega,x))\leq \sum_{i=k}^{m-1}f_1(T^i(\omega,x))$,
  we have
  \begin{align*}
  \pi_T(\Phi)(\omega, \epsilon, m)
  &=\sup\{ \sum_{x\in F}\exp f_m(\omega,x):
  F \, \text{is an} \, (\omega, \epsilon, m)\text{-separated set of } T \}\\
  \begin{split}
  \leq  \sup\{ \sum_{x\in F}\exp (f_{kn}(\omega,x)+
  \sum_{i=k}^{m-1}f_1(T^{i}(\omega,x))):\quad\quad\quad\quad\quad\quad\quad \\
  F \, \text{is an} \,  (\omega, \epsilon, m) \text{-separated set of } T\}
  \end{split}\\
  \begin{split}
  \leq \sum_{i=k}^{m-1}\parallel f_1(\vartheta^i \omega) \parallel_{\infty}
   \sup\{ \sum_{x\in F}\exp (f_{kn}(\omega,x):\quad\quad\quad\quad\quad\quad\quad\quad \\
  F \, \text{is an} \, (\omega, \delta, n) \text{-separated set of }
  T^k\}.
  \end{split}
  \end{align*}
 Since $f_1 \in \mathbf{L}_{\mathcal{E}}^1(\Omega,\mathcal{C}(X))$,
 so $\int \sum_{i=k}^{m-1}\parallel f_1(\vartheta^i \omega)
 \parallel_{\infty}  \,d\mathbf{P}(\omega) < \infty $,
  then by \eqref{pressure},
  $ k\pi_T(\Phi)(\epsilon) \leq \pi_{T^k}(\Phi^k)(\delta) $.
  If $\epsilon \rightarrow 0$, then $\delta \rightarrow 0$, so the
  inequality opposite follows from the definition of the topological
  pressure.
\end{proof}

In the argument of the variational principle, only the first part of
the corollary is used. However, for integrability, we give the other
part, which shows the similarity with the usual additive situation,
i.e., $f_n=\sum_{i=0}^{n-1}f_1{T^i(\omega,x)}$.

\section{The variational principle for subadditive sequence of potentials}

\label{SubVP} First we give the following Lemma which we need in the
proof of our main theorem.
\begin{lemma}\label{lem2}
 For a sequence probability measures $\{\mu_n\}_{n=1}^{\infty}$ in
 $\mathcal{P}_{\mathbf{P}}(\mathcal{E})$, where $\mu_n
 =\frac{1}{n}\sum_{i=0}^{n-1}\Theta^i\nu_n$ and $\{\nu_n\}_{n=1}^{\infty}\subset
 \mathcal{P}_{\mathbf{P}}(\mathcal{E})$, if $\{n_i\}$ is some
 subsequence of natural numbers $\mathbb{N}$ such that $\mu_{n_i}\rightarrow \mu \in
 \mathcal{M}_{\mathbf{P}}^1(\mathcal{E},T)$, then for any $k\in
 \mathbb{N}$,
\begin{equation}\label{seq1}
\limsup_{i\rightarrow \infty}\frac{1}{n_i} \int f_{n_i}(\omega,x) \,
d\nu_{n_i} \leq \frac{1}{k}\int f_k \,d\mu.
\end{equation}
In particular, the left part is no more than $\Phi^*(\mu)$.
\end{lemma}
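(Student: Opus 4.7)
The approach is a Misiurewicz-style subadditive decomposition: for each fixed $k$ I will bound $k f_n(\omega,x)$ from above by a Birkhoff-type sum of translates of $f_k$ plus controllable boundary pieces, integrate against $\nu_n$, and then pass to the limit along $n_i$ using the weak convergence $\mu_{n_i}\to\mu$ in $\mathcal{P}_{\mathbf{P}}(\mathcal{E})$.

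For the decomposition, fix $k$ and $n>k$. For each $j\in\{0,1,\ldots,k-1\}$, write $n-j=a_j k+b_j$ with $0\leq b_j<k$. Applying subadditivity first to split off the initial segment of length $j$ and then iterating on consecutive blocks of length $k$ yields
\begin{equation*}
f_n \leq f_j + \sum_{\ell=0}^{a_j-1} f_k\circ\Theta^{j+\ell k} + f_{b_j}\circ\Theta^{n-b_j}.
\end{equation*}
Summing over $j$ and using the unique representation $i=j+\ell k$ with $0\leq j<k$ to identify $\{j+\ell k:0\leq j<k,\ 0\leq\ell<a_j\}$ with $\{0,1,\ldots,n-k\}$, one obtains
\begin{equation*}
k f_n \leq \sum_{j=0}^{k-1} f_j + \sum_{i=0}^{n-k} f_k\circ\Theta^i + \sum_{j=0}^{k-1} f_{b_j}\circ\Theta^{n-b_j}.
\end{equation*}

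Now integrate against $\nu_n$. The subadditive upper bound $f_m\leq\sum_{s=0}^{m-1}f_1\circ\Theta^s$, together with the fact that each $\Theta^i\nu_n$ has marginal $\mathbf{P}$ on $\Omega$, gives $\int f_j\,d\nu_n\leq j\|f_1\|$ and $\int f_{b_j}\circ\Theta^{n-b_j}\,d\nu_n\leq k\|f_1\|$, so both boundary sums contribute at most $k^2\|f_1\|$, independently of $n$. By the definition of $\mu_n$,
\begin{equation*}
\sum_{i=0}^{n-k}\int f_k\,d(\Theta^i\nu_n) = n\int f_k\,d\mu_n - \sum_{i=n-k+1}^{n-1}\int f_k\,d(\Theta^i\nu_n),
\end{equation*}
where the subtracted term is bounded by $k\|f_k\|$. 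Dividing through by $kn$ and taking $\limsup$ along $n_i$, the $O(1/n)$ boundary contributions vanish and, since $f_k\in\mathbf{L}_{\mathcal{E}}^1(\Omega,\mathcal{C}(X))$, the convergence $\mu_{n_i}\to\mu$ in the Kifer topology on $\mathcal{P}_{\mathbf{P}}(\mathcal{E})$ gives $\int f_k\,d\mu_{n_i}\to\int f_k\,d\mu$, yielding \eqref{seq1}. The \emph{in particular} claim then follows by letting $k\to\infty$ and recalling that $\Phi^*(\mu)=\lim_k \frac{1}{k}\int f_k\,d\mu$ (with the limit equal to the infimum by Fekete's lemma applied to the subadditive sequence $k\mapsto\int f_k\,d\mu$).

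The main technical point is the combinatorial bookkeeping in the decomposition: verifying that the indices $\{j+\ell k\}$ cover $\{0,\ldots,n-k\}$ exactly once. This follows from uniqueness of representation modulo $k$ combined with the identity $j+a_j k=n-b_j$, which shows that $i=j+\ell k$ with $\ell<a_j$ forces $i\leq n-k$ and, conversely, every $i\leq n-k$ admits such a pair $(j,\ell)$. A secondary point is to ensure that the hypothesis ``$\mu_{n_i}\to\mu$'' is being interpreted in the topology that tests against $\mathbf{L}_{\mathcal{E}}^1(\Omega,\mathcal{C}(X))$; this is the standard choice in the bundle RDS setting of \cite{Kifer2001}, and it is precisely what makes $\int f_k\,d\mu_{n_i}\to\int f_k\,d\mu$ available.
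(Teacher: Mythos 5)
Your proof is correct and takes essentially the same route as the paper's: the same Misiurewicz-style block decomposition of $f_n$ summed over the $k$ offsets $j$, the same $O(k^2)$ control of the boundary pieces via $f_r\leq\sum_{t=0}^{r-1}f_1\circ\Theta^t$ and $f_1\in\mathbf{L}_{\mathcal{E}}^1(\Omega,\mathcal{C}(X))$, and the same passage to the limit using $\mu_{n_i}\to\mu$ (the paper packages the last step through the auxiliary measures $\mu^{\prime}_n=\frac{1}{n-k+1}\sum_{s=0}^{n-k}\Theta^s\nu_n$, which is exactly the algebra you perform directly). The concluding step via Fekete's lemma for the \emph{in particular} claim also matches the paper's.
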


\begin{proof}
For $0\leq j <k$ and $n\geq 2k$, by the subadditivity of $\Phi$,
\begin{align*}
f_n &\leq f_{n-j}\Theta^j + f_j \\
    &\leq (f_k + f_k \Theta^k + \cdots + f_k\Theta^{[\frac{n-j}{k}-1]k}
    + f_{n-j-[\frac{n-j}{k}]k}\Theta^{[\frac{n-j}{k}]k}) \Theta^j
    +f_j \\
    &= \sum_{l=0}^{[\frac{n-j}{k}-1]}f_k \Theta^{lk+j} +
    (f_{n-j-[\frac{n-j}{k}]k}\Theta^{[\frac{n-j}{k}]k+j} +f_j)
\end{align*}
Summing over $j$ and dividing by $k$, we have
\begin{align*}
f_n  &\leq \frac{1}{k}
\sum_{j=0}^{k-1}\sum_{l=0}^{[\frac{n-j}{k}-1]}f_k \Theta^{lk+j} +
\frac{1}{k}
\sum_{j=0}^{k-1}(f_{n-j-[\frac{n-j}{k}]k}\Theta^{[\frac{n-j}{k}]k+j}
    +f_j) \\
      &=\frac{1}{k} \sum_{s=0}^{n-k}f_k\Theta^s +
       \frac{1}{k}
\sum_{j=0}^{k-1}(f_{n-j-[\frac{n-j}{k}]k}\Theta^{[\frac{n-j}{k}]k+j}
    +f_j).
\end{align*}
Since $n-j-[\frac{n-j}{k}]\leq k$ and $f_r\leq
\sum_{t=0}^{r-1}f_1\Theta^t$, where $ 1\leq r\leq k$, integrating
this  inequality, then by $f_1\in \mathbf{L}_{\mathcal{E}}^1
(\Omega,\mathcal{C}(X))$, we get

\begin{align}\label{seqlem2}
\int f_n \, d\nu_n
  &\leq \frac{1}{k}\int \sum_{s=0}^{n-k}f_k\Theta^s \,d\nu_n +
       \frac{1}{k} \int
\sum_{j=0}^{k-1}(f_{n-j-[\frac{n-j}{k}]k}\Theta^{[\frac{n-j}{k}]k+j}
    +f_j)d\nu_n \nonumber \\
  & \leq \frac{1}{k}\int \sum_{s=0}^{n-k}f_k\Theta^s \,d\nu_n +
\frac{1}{k} 2k^2 \int  \|f(\omega) \|_{\infty}
\,d\mathbf{P}(\omega) \nonumber \\
  & = \frac{n-k+1}{k}\int f_k \,d\mu^{\prime}_n + 2k \int  \|f(\omega) \|_{\infty}
\,d\mathbf{P}(\omega),
\end{align}
where $\mu^{\prime}_n=\frac{1}{n-k+1}\sum_{s=0}^{n-k}\Theta^s\nu_n$.
Since for any $f\in
\mathbf{L}_{\mathcal{E}}^1(\Omega,\mathcal{C}(X))$,
\begin{align*}
&n\int f\,d\mu_n -(n-k+1)\int f\, d\mu^{\prime}_n \\
=&\sum_{i=n-k+1}^{n-1}\int f\Theta^i \, d\nu_n \leq  k\int
\|f(\omega)\|_{\infty} \,d\mathbf{P}(\omega).
\end{align*}
Dividing by $n$ and letting $n\rightarrow \infty$, we get
$$
\lim_{n\rightarrow \infty}\int f \,d\mu_n = \lim_{n\rightarrow
\infty}\int f \,d\mu_n^{\prime}.
$$
Observing that $\lim_{i\rightarrow \infty}\mu^{\prime}_{n_i}=\mu$,
which follows from $\{\mu_{n_i}\}\rightarrow \mu$, we have
\begin{equation}\label{seq2}
\lim_{i\rightarrow \infty}\int f_k \, d\mu^{\prime}_{n_i}=\int f_k
\, d\mu.
\end{equation}
Replacing $n$ by $n_i$ in \eqref{seqlem2}, dividing by $n_i$ and
passing to $\limsup_{i\rightarrow \infty}$,  \eqref{seq1} follows by
\eqref{seq2}. Letting $k\rightarrow \infty$, the result holds.
\end{proof}

\begin{theorem}
Let $T$ be a continuous bundle RDS on $\mathcal{E}$,
$\Phi=\{f_n\}_{n=1}^{\infty}$ is subadditive, $f_1\in
\mathbf{L}_{\mathcal{E}}^1(\Omega, \mathcal{C}(X))$, and each $f_n$
be measurable in $\omega$ and continuous in $x$. If
$\pi_T(\Phi)>-\infty$, then
\begin{equation*}
\pi_T(\Phi)=\sup\{  h_{\mu}^{(r)}(T) + \Phi^*(\mu): \mu \in
\mathcal{M}_{\mathbf{P}}^1(\mathcal{E},T)\, \text{and}
\,\,\Phi^*(\mu)>-\infty \}.
\end{equation*}
\end{theorem}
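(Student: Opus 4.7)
The plan is to establish the two inequalities $\pi_T(\Phi) \geq \sup\{h_{\mu}^{(r)}(T) + \Phi^*(\mu)\}$ and its reverse by adapting Kifer's additive proof \cite{Kifer2001} (itself a Misiurewicz-style argument) with Lemma \ref{lem2} replacing the direct weak-$*$ convergence of Birkhoff averages. Since $\pi_T(\Phi) > -\infty$, the upper-bound direction will automatically produce an invariant measure $\mu$ with $\Phi^*(\mu) > -\infty$, so the supremum on the right is taken over a non-empty set.

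For the inequality $\pi_T(\Phi) \geq h_{\mu}^{(r)}(T) + \Phi^*(\mu)$, fix $\mu \in \mathcal{M}_{\mathbf{P}}^1(\mathcal{E},T)$ with $\Phi^*(\mu) > -\infty$ and a partition $\mathcal{Q} = \{(\Omega\times P_i)\cap \mathcal{E}\}$, where $\{P_i\}$ has diameter less than $\epsilon$ and $\mu$-negligible boundaries. Pick $x_A\in A$ almost maximising $f_n(\omega,\cdot)$ in every atom $A$ of $\mathcal{Q}^n_\omega = \bigvee_{i=0}^{n-1}(T^i_\omega)^{-1}\mathcal{Q}(\vartheta^i\omega)$ and apply Jensen's inequality with $a_A = \mu_\omega(A)$ and $b_A = \exp f_n(\omega, x_A)$, giving
\[
H_{\mu_\omega}(\mathcal{Q}^n_\omega) + \int f_n \, d\mu_\omega \leq \log \sum_A e^{f_n(\omega, x_A)} + o(1).
\]
The atoms of $\mathcal{Q}^n_\omega$ have $d^\omega_n$-diameter at most $\epsilon$, so after a standard refinement the $\{x_A\}$ contain an $(\omega,\epsilon',n)$-separated subset for each $\epsilon'<\epsilon$ which, up to a bounded multiplicative constant, dominates the entire sum. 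Integrating in $\omega$, dividing by $n$, letting $n\to\infty$, then $\epsilon'\to 0$ and finally refining $\mathcal{Q}$, yields the lower bound.

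For the reverse inequality, use Lemma \ref{lemkifer} to select a measurable family of maximal $(\omega,\epsilon,n)$-separated sets $G^{(n)}_\omega$ realising $\pi_T(\Phi)(\omega,\epsilon,n)$ up to a factor $1-\delta$, and define the random Gibbs-type measure $\sigma_n$ with disintegration $\sigma_{n,\omega} = \pi_T(\Phi)(\omega,\epsilon,n)^{-1}\sum_{x\in G^{(n)}_\omega} e^{f_n(\omega,x)}\delta_x$, averaged dynamically as $\mu_n = \frac{1}{n}\sum_{i=0}^{n-1}\Theta^i_*\sigma_n$. Extract a weak-$*$ limit $\mu_{n_i}\to \mu \in \mathcal{M}^1_{\mathbf{P}}(\mathcal{E},T)$; the marginal $\mathbf{P}$ is preserved because $\vartheta$ preserves $\mathbf{P}$. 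The identity
\[
\log \pi_T(\Phi)(\omega,\epsilon,n) = H_{\sigma_{n,\omega}}(\{\text{singletons}\}) + \int f_n \, d\sigma_{n,\omega}
\]
splits the pressure into an entropy and a potential contribution. A Misiurewicz block decomposition using a partition of diameter less than $\epsilon/2$ with $\mu$-null boundaries (so that each atom of the $n$-refined partition contains at most one point of $G^{(n)}_\omega$ and weak-$*$ convergence of entropy passes to the limit) bounds the averaged entropy term by $h_{\mu}^{(r)}(T) + o_n(1)$, while Lemma \ref{lem2} applied to $\nu_n = \sigma_n$ gives $\limsup_{i\to\infty}\frac{1}{n_i}\int f_{n_i}\,d\sigma_{n_i} \leq \Phi^*(\mu)$. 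Combining and letting $\epsilon\to 0$ completes the proof.

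The main obstacle, relative to Kifer's additive argument, is exactly the potential average $\frac{1}{n}\int f_n\,d\sigma_n$ in the upper-bound direction: in the additive case $f_n$ telescopes and the term rewrites as $\int f\,d\mu_n \to \int f\,d\mu$ by weak-$*$ convergence, but subadditivity offers no such shortcut. Lemma \ref{lem2} is the dedicated replacement, comparing $\frac{1}{n_i}\int f_{n_i}\,d\sigma_{n_i}$ with $\frac{1}{k}\int f_k\,d\mu$ for each fixed $k$ and then passing $k\to\infty$ to reach $\Phi^*(\mu)$. A secondary technical point is arranging $\mu$-null boundaries uniformly in the disintegration, which is handled as in \cite{Kifer2001} by choosing partitions built from balls of all but countably many radii.
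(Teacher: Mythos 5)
Your second half (the inequality $\pi_T(\Phi)(\epsilon)\le h_\mu^{(r)}(T)+\Phi^*(\mu)$ via Gibbs-type atomic measures on maximal separated sets, the Misiurewicz block decomposition, and Lemma \ref{lem2} replacing the telescoping of Birkhoff sums) is essentially the paper's argument, modulo a slip in the normalisation of $\sigma_{n,\omega}$ (dividing by $\pi_T(\Phi)(\omega,\epsilon,n)$ rather than by $\sum_{y\in G^{(n)}_\omega}e^{f_n(\omega,y)}$ does not give a probability measure, and the ``identity'' for $\log\pi_T(\Phi)(\omega,\epsilon,n)$ is really an inequality up to an additive constant).

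The genuine gap is in the first direction. You claim that the chosen points $\{x_A\}$ contain an $(\omega,\epsilon',n)$-separated subset that dominates $\sum_A e^{f_n(\omega,x_A)}$ ``up to a bounded multiplicative constant.'' That constant is not bounded: in the greedy extraction, each selected point can absorb all atoms $A'$ with $d_n^\omega(x_A,x_{A'})<\epsilon'$, and the number of such atoms grows exponentially in $n$ (in the paper's setup, each $\epsilon'$-ball around $T^i_\omega x_A$ lies in a single element $Q_0\cup Q_j$ of the cover $\mathcal R$, which meets two partition atoms, giving the factor $2^n$; with your partition of small diameter and null boundaries there is no multiplicity bound at all, which is exactly why the paper passes to compact subsets $Q_i\subset P_i$, at the cost of the additive $+1$ in \eqref{entr}, and works with the Lebesgue number of $\mathcal R$). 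The surviving factor $2^n$ leaves a residual additive constant $\log 2$ (plus the $1$ from the compact-subset approximation) in the inequality $h_\mu^{(r)}(T,\Omega\times\mathcal P)+\Phi^*(\mu)\le 1+\log 2+\pi_T(\Phi)$, which does not vanish when you divide by $n$ and let $n\to\infty$. The paper removes it by the power trick: apply the inequality to $T^m$ and $\Phi^m$, use $h_\mu^{(r)}(T^m)=m\,h_\mu^{(r)}(T)$, $(\Phi^m)^*(\mu)=m\,\Phi^*(\mu)$ and Corollary \ref{lem1} ($\pi_{T^m}(\Phi^m)\le m\,\pi_T(\Phi)$), then divide by $m$ and let $m\to\infty$. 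Your proposal omits this step entirely, and without it (or some substitute) the first inequality does not close.
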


\begin{proof}
 Let $\mu \in \mathcal{M}_{\mathbf{P}}^1(\mathcal{E},T) $,
 $\Phi^*(\mu)>-\infty$, $\mathcal{P}=\{P_1, \cdots,
 P_k\}$, be a finite measurable partition of $X$, and $\epsilon$ be
 a  positive number with $\epsilon k \log k <1$.
 Denote by $\mathcal{P}(\omega)=\{P_1(\omega), \cdots,
 P_k(\omega)\} \}$, $P_i(\omega)=P_i \cap \mathcal{E}_{\omega}, i=1,\cdots,
 k$, the corresponding partition of $\mathcal{E}_{\omega}$. By the
 regularity of $\mu$, we can find compact sets $Q_i \subset P_i, 1 \leq i \leq
 k$, such that
 \begin{equation*}
\mu (P_i \backslash Q_i)= \int \mu_{\omega}(P_i(\omega)\backslash
Q_i(\omega)) \, d\mathbf{P}(\omega) < \epsilon,
 \end{equation*}
where $Q_i(\omega)= Q_i \cap \mathcal{E}_{\omega}$. Let
$\mathcal{Q}(\omega)=\{Q_0(\omega),\cdots, Q_k(\omega) \}$ be the
partition of $\mathcal{E}_{\omega}$, where $Q_0(\omega)=
\mathcal{E}_{\omega} \backslash \bigcup_{i=1}^kQ_i(\omega)$. Then by
the results of Kifer \cite{Kifer} and Bogensch{\"u}tz \cite{Bogen},
the following inequality holds (See \cite{Kifer2001} for details),
\begin{equation}\label{entr}
h_{\mu}^{(r)}(T, \Omega \times \mathcal{P})\leq  h_{\mu}^{(r)}(T,
\Omega \times \mathcal{Q}) +1,
\end{equation}
where $\Omega \times \mathcal{P}$ (respectively by $\Omega \times
\mathcal{Q}$) denotes the partition of $ \Omega \times X$ into sets
$\Omega \times P_i$ (respectively by $\Omega \times Q_i$). Set
\begin{equation*}
\mathcal{Q}_n(\omega)=
\bigvee_{i=0}^{n-1}(T^i_{\omega})^{-1}\mathcal{Q}(\vartheta^i\omega),
\quad  f^*_n(\omega, C):=\sup\{f_n(\omega,x): x\in \overline{C} \}
\end{equation*}
for $C \in \mathcal{Q}_n(\omega)$. Then  by the well-known
inequality (See \cite{Walters})  $ \sum_{1\leq i\leq m}p_i(a_i- \log
p_i)\leq \sum_{1\leq i\leq m} \exp a_i, $  where  each $a_i$ is a
real number, $p_i\geq 0 $ and $\sum_{i=1}^k p_i =1$, we have
\begin{equation}\label{pre1}
 H_{\mu_{\omega}}(\mathcal{Q}_n(\omega))+\int_{\mathcal{E}_{\omega}}f_n(\omega)
 \,d\mu_{\omega} \leq \log \sum_{C\in \mathcal{Q}_n(\omega)} \exp
 f_n^*(\omega,C).
\end{equation}
Let $ \mathcal{R}=\{Q_0 \cup Q_1, \cdots,  Q_0 \cup Q_k\}$ be the
open cover set of $X$, and $\delta$ be the Lebesgue number for
$\mathcal{R}$. Then for every $\omega$,
$\mathcal{R}_{\omega}=\{Q_0(\omega) \cup Q_1(\omega), \cdots,
Q_0(\omega) \cup Q_k(\omega) \}$ is the open cover of
$\mathcal{E}_{\omega}$ and $\delta$ is also a Lebesgue number. Let
$x(C)$ be the point in $\overline{C}$ such that $f_n(\omega,x(C) )=
f_n^*(\omega, C)$. If $d(x(C),x(D))<\delta$, then $x(C)$ and $x(D)$
are in the same element of $\mathcal{R}_{\omega}$, say $Q_0(\omega)
\cup Q_j(\omega), 0\leq j <k+1$. Hence for each $C$, there are at
most $2^n$ elements $D$ of $\mathcal{Q}_n(\omega)$ such that
\begin{equation*}
d_n^{\omega}(x(C),x(D))= \max_{0\leq j<
n}\{d(T_{\omega}^j(x(C)),T_{\omega}^j(x(D)) \}<\delta.
\end{equation*}
Now an $(\omega, \delta, n)$-separated set $E$ can be constructed
such that
\begin{equation}\label{card}
 \sum_{C\in \mathcal{Q}_n(\omega)} f_n^*(\omega, C)\leq 2^n \sum_{y\in
 E}f_n(\omega,y)
\end{equation}
We first select the point $x(C_1)$ such that $f_n^*(\omega,
C_1)=\max_{C\in \mathcal{Q}_n(\omega)}f_n^*(\omega, C)$, then select
the second point $x(C_2)$ such that
$$
f_n^*(\omega, C_2)=\max_{\substack{C^{\prime}\in \mathcal{Q}_n(\omega)\\
 d_n^{\omega}(x(C_1), x(C^{\prime})) \geq \delta }} f_n^*(\omega,
 C^{\prime}),
$$
the third point $x(C_3)$ such that
$$
f_n^*(\omega, C_3)=\max_{\substack{C^{\prime\prime}\in \mathcal{Q}_n(\omega)\\
 d_n^{\omega}(x(C_1), x(C^{\prime\prime})) \geq \delta \\
  d_n^{\omega}(x(C_2), x(C^{\prime\prime}))\geq \delta}}
  f_n^*(\omega, C^{\prime\prime}),
$$
continue this process, a finite step $m$ can complete this selection
since $\mathcal{Q}_n(\omega)$ is finite. Let $E=\{x(C_1), \cdots,
x(C_m) \}$. Obviously  $E$ is an $\omega, \delta,n$-separated set.
By the above analysis, for each step, we delete at most $2^n$
elements of $\mathcal{Q}_n(\omega)$, so the inequality \eqref{card}
holds.

From \eqref{pre1} and \eqref{card}, we get
\begin{equation*}
H_{\mu_{\omega}}(\mathcal{Q}_n(\omega))+\int_{\mathcal{E}_{\omega}}f_n(\omega)
 \,d\mu_{\omega} \leq
 n\log2 +\log \pi_T(\Phi)(\omega, \delta, n).
\end{equation*}
Integrating this inequality through $\mathbf{P}$, dividing by $n$
and letting $n\rightarrow \infty$, from \eqref{pressure},
\eqref{entr} and $\Phi^*(\mu)>-\infty$, we have
\begin{align*}
&H_{\mu}^{(r)}(T, \Omega \times \mathcal{P}) + \Phi^*(\mu)\\
 \leq
1+H_{\mu}^{(r)}(&T, \Omega \times \mathcal{Q})\Phi^*(\mu)
 \leq  1+\log 2 +
\pi_T(\Phi)(\delta).
\end{align*}
By the arbitrariness of $\mathcal{P}$ and $\delta$,
\begin{equation*}
h_{\mu}^{(r)}(T) + \Phi^*(\mu) \leq 1+\log 2 + \pi_T(\Phi).
\end{equation*}
Replacing $T$ and $\Phi$ by $T^n$ and $\Phi^n$, respectively, then
by the equality $h_{\mu}^{(r)}(T^n)=n h_{\mu}^{(r)}(T)$ (See
\cite{Bogen,Kifer}) and $(\Phi^n)^*(\mu)= n \Phi^*(\mu)$, we obtain
\begin{equation*}
n(h_{\mu}^{(r)}(T) + \Phi^*(\mu)) \leq 1+\log 2 + \pi_{T^n}(\Phi^n).
\end{equation*}
Using lemma \ref{lem1}, dividing by $ n $ and letting $n\rightarrow
\infty$, we get the fist part
\begin{equation*}
 h_{\mu}^{(r)}(T) + \Phi^*(\mu)) \leq  \pi_{T }(\Phi ).
\end{equation*}

In the opposite direction, choose some small $\epsilon
> 0$ with $\pi_T(\Phi,\epsilon)>-\infty$,  and a family of
measurable in $\omega$ maximal $(\omega, \epsilon, n)$-separated
sets $G(\omega, \epsilon, n) \subset \mathcal{E}_{\omega}$  by Lemma
\ref{lemkifer} such that
\begin{equation}\label{sepapre}
 \sum_{x\in G(\omega, \epsilon, n)} \exp f_n(\omega,x) \geq
 \frac{1}{e} \pi_T(\Phi)(\omega, \epsilon, n).
\end{equation}
Let $\{\nu_{\omega}^{(n)}\} $ be a family of atomic measures on
$\mathcal{E}_{\omega}$ such that they are measurable disintegrations
of some probability measure $\nu^{(n)}$, i.e.,
$d\nu^{(n)}(\omega,x)=d\nu^{(n)}_{\omega}(x)\,d\mathbf{P}(\omega)$,
where
\begin{equation*}
\nu_{\omega}^{(n)}=\frac{\sum_{x\in G(\omega, \epsilon, n)}\exp
f_n(\omega,x)\delta_x}{\sum_{y\in G(\omega, \epsilon, n)}\exp
f_n(\omega,y)}.
\end{equation*}
Denote
$$
\mu^{(n)}=\frac{1}{n}\sum_{i=0}^{n-1}\Theta^i \nu^{(n)}.
$$
By \eqref{pressure} and Lemma 2.1 (i)-(ii) in \cite{Kifer}, choose a
subsequence $\{n_j\}$  satisfying the following two limits
simultaneously,
\begin{align}\label{twolim}
 \pi_T(\Phi)(\epsilon)&= \lim_{j\rightarrow
\infty} \frac{1}{n_j} \int \log \pi_T(\Phi)(\omega,\epsilon, n_j) \,
d\mathbf{P}(\omega), \nonumber \\
 \lim_{j\rightarrow \infty} \mu^{(n_j)}&= \mu \quad \text{for some}
 \quad \mu \in \mathcal{M}_{\mathbf{P}}^1(\mathcal{E},T).
\end{align}
Choose a partition $\mathcal{P}=\{P_1, \cdots, P_k\}$ of $X$ with
diam $\mathcal{P} \leq \epsilon$ and $\int \mu_{\omega}(\partial
P_i) \, d\mathbf{P}(\omega)=0$ for all $1\leq i \leq k$, where
$\partial$ denotes the boundary. Set
$\mathcal{P}(\omega)=\{P_1(\omega), \cdots, P_k(\omega)\}$,
$P_i(\omega)=P_i \cap \mathcal{E}_{\omega}$. Since each element of
$\bigvee_{i=0}^{n-1}(T^i_{\omega})^{-1}\mathcal{P}(\vartheta^i\omega)$
contains at most one element of $G(\omega, \epsilon, n)$, by
\eqref{sepapre}, we have
\begin{align*}
\begin{split}
&H_{\nu_{\omega}^{(n)}}\bigl(\bigvee_{i=0}^{n-1}(T^i_{\omega})^{-1}\mathcal{P}(\vartheta^i\omega)
\bigr) + \int f_n(\omega)\, d\nu_{\omega}^{(n)}\\
=\log \bigl( &\sum_{x\in G(\omega, \epsilon, n)}\exp
f_n(\omega,x)\bigr)  \geq  \log \pi_T(\Phi)(\omega, \epsilon, n)-1.
\end{split}
\end{align*}
Let $\mathcal{Q}=\{Q_1, \cdots, Q_k\}$, $Q_i=(\Omega \times P_i)\cap
\mathcal{E}$; then $\mathcal{Q}$ is a partition of $\mathcal{E}$ and
$Q_i(\omega)=\{x\in \mathcal{E}_{\omega}:(\omega,x)\in
Q_i\}=P_i(\omega)$. Integrating the inequality against $\mathbf{P}$,
by the definition of the conditional entropy, we get
\begin{equation*}%\label{seq3}
H_{\nu^{(n)}}\bigl(
\bigvee_{i=0}^{n-1}(\Theta^i)^{-1}\mathcal{Q}\mid\mathcal{F}_{\omega}
\bigr) + \int f_n(\omega,x) \, d\nu^{(n)} \geq \int \log
\pi_T(\Phi)(\omega, \epsilon, n) \, d\mathbf{P}(\omega)-1.
\end{equation*}
Setting $q, n \in \mathbb{N}$, $1<q<n$, using the usual method as in
\cite{Walters} and using the subadditivity of conditional entropy
\cite{Kifer,Bogenthesis} and Lemma 3.2 in \cite{LedWal}, we have the
following inequality (  See \cite{Kifer2001} for  the detail)
$$
q H_{\nu^{(n)}}\bigl(
\bigvee_{m=0}^{n-1}(\Theta^i)^{-1}\mathcal{Q}\mid\mathcal{F}_{\omega}
\bigr) \leq n
H_{\mu^{(n)}}\bigl(\bigvee_{i=0}^{q-1}(\Theta^i)^{-1}\mathcal{Q}\mid\mathcal{F}_{\omega}\big)
+2q^2\log k.
$$
Then by the above two inequalities,
\begin{align*}
&q\int \log \pi_T(\Phi)(\omega, \epsilon, n) \,
d\mathbf{P}(\omega)-q
\\ \leq n
H_{\mu^{(n)}}\bigl(&\bigvee_{i=0}^{q-1}(\Theta^i)^{-1}\mathcal{Q}\mid\mathcal{F}_{\omega}\big)
+2q^2\log k + \int f_n(\omega,x)\, d\nu^{(n)}.
\end{align*}
Since $\mu \in
 \mathcal{M}_{\mathbf{P}}^1(\omega,T)$ and
$\bigvee_{i=0}^{q-1}(T_{\omega}^i)^{-1}\mathcal{P}(\vartheta^i\omega)
\subset \bigcup (T_{\omega}^i)^{-1}\mathcal{P}(\vartheta^i\omega)$,
it is easy to see that $\mu_{\omega}( \partial
\bigvee_{i=0}^{q-1}(T_{\omega}^i)^{-1}\mathcal{P}(\vartheta^i\omega))=0
\, \mathbf{P}$-a.s.
 Dividing by $n$, passing to the limit along a
subsequence $n_j\rightarrow \infty $ satisfying \eqref{twolim} and
taking into account Lemma \ref{lem2} and Lemma 2.1 (iii) in
\cite{Kifer}, it follows in view of the choice of the partition
$\mathcal{P}$ that
$$
q\pi_T(\Phi)(\epsilon)\leq
H_{\mu}\bigl(\bigvee_{i=0}^{q-1}(\Theta^i)^{-1}\mathcal{Q}\mid\mathcal{F}_{\omega}\big)+
q \Phi^*(\mu),
$$
then $\Phi^*(\mu)>-\infty$ since $\pi_T(\Phi)>-\infty$. Dividing
this inequality by $q$ and letting $q\rightarrow \infty$, so
$\pi_T(\Phi)(\epsilon)\leq h_{\mu}^{(r)}(T,\mathcal{Q})+
\Phi^*(\mu)$. Hence $\pi_T(\Phi)(\epsilon)\leq h_{\mu}^{(r)}(T)+
\Phi^*(\mu)$ and letting $\epsilon \rightarrow 0$, the required
inequality follows, then the results holds.
\end{proof}

\begin{remark}
If $\pi_T(\Phi)\geq\sup\{  h_{\mu}^{(r)}(T) + \Phi^*(\mu)\}$, then
obviously $\pi_T(\Phi)>-\infty$ by $\Phi^*(\mu)>-\infty$. So the
condition we give is only used in the opposite direction $``\leq"$.
In fact, by the above argument, it is not hard to see that
$\pi_T(\Phi)=-\infty$ is equivalent to $\Phi^*(\mu)=-\infty$ for all
invariant measure $\mu$.
\end{remark}

\section*{Acknowledgements}
The first author is supported by a grant from Postdoctoral Science
 Research Program of Jiangsu Province (0701049C).
 The second  author is partially supported by the National Natural Science Foundation of China (10571086)
 and National Basic Research Program of China (973 Program) (2007CB814800).

\end{document}